\newif\iffinal
\DeclareMathOperator{\Int}{Int}
\newcommand{\N}{\mathbb{N}}
\newcommand{\Q}{\mathbb{Q}}
\newcommand{\p}{\mathbb{P}}
\newcommand{\Z}{\mathbb{Z}}
\newcommand{\val}{\mathsf{v}}
\newcommand{\IntZ}{\Int(\Z)}
\newcommand{\fixdiv}{\mathsf{d}}
\newcommand{\cont}{\mathsf{c}}
\newcommand{\Mod}[1]{\ (\mathrm{mod}\ #1)}
\newtheorem{proposition}{Proposition}[section]
\newtheorem{lemma}[proposition]{Lemma}
\theoremstyle{definition}
\newtheorem{definition}[proposition]{Definition}
\newtheorem{example}[proposition]{Example}
\newtheorem{remark}[proposition]{Remark}
\newtheorem{note}[proposition]{Note}
\author{Sarah Nakato}
\address{\parbox{\linewidth}{Institut für Analysis und Zahlentheorie,
  Graz University of Technology\\
    Kopernikusgasse 24, 8010 Graz, Austria}}
\email{\href{mailto:snakato@tugraz.at}{snakato@tugraz.at}}
\thanks{S.~Nakato is supported by the Austrian Science Fund (FWF):
P~30934}
\title[Non-absolutely irreducible elements]{Non-absolutely irreducible elements
in the ring of Integer-valued polynomials}
\keywords{irreducible elements, absolutely irreducible elements, non-absolutely
irreducible elements, integer-valued polynomials}
\subjclass[2010]{13A05, 13B25, 13F20, 11R09, 11C08}
\begin{document}

\begin{abstract}
Let $R$ be a commutative ring with identity. An element $r \in R$ is said to be
absolutely irreducible in $R$ if for all natural numbers $n>1$, $r^n$ has essentially
only one factorization namely $r^n = r \cdots r$.
If $r \in R$ is irreducible in $R$ but for some $n>1$, $r^n$ has other factorizations distinct from
$r^n = r \cdots r$, then $r$ is called non-absolutely irreducible.

In this paper, we construct non-absolutely irreducible elements in the ring
$\IntZ = \{f\in \Q[x] \mid f(\Z) \subseteq \Z\}$ of integer-valued polynomials. We also
give generalizations of these constructions.
\end{abstract}

\maketitle
\section{Introduction}
The ring $\IntZ = \{f\in \Q[x] \mid f(\Z) \subseteq \Z\}$ of integer-valued polynomials
is known not to be a unique factorization domain. To fully understand the factorization
behaviour of $\IntZ$, several researchers have investigated the irreducible elements of
$\IntZ$, see for example \cite{Antoniou-Nakato-Rissner},
\cite{Cahen-Chabert:Elasticity-for-integral-valued-polynomials},
\cite{Chapman-McClain:irrediv-Irreducible-polynomials} and
\cite{Peruginelli:square-free-denominator}.

In \cite[Chapter~7]{Geroldinger-Halter-Koch:Non-unique-factorizations}, Geroldinger and Halter-Koch
defined a type of irreducible elements called absolutely irreducible. They called an
irreducible element $r$ absolutely irreducible if for all natural numbers $n>1$, each
power $r^n$ of $r$ has essentially only one factorization namely $r^n = r \cdots r$.
Such irreducible elements have also been called strong atoms in \cite{Chapman-Krause:Atomic-decay}
and completely irreducible in \cite{Kaczorowski:completely-irreducible}.

Of much interest are the non-absolutely irreducible elements. We call an irreducible element
$r$ non-absolutely irreducible if there exists a natural number $n>1$ such that $r^n$ has other
factorizations essentially distinct from $r \cdots r$.
In \cite{Chapman-Krause:Atomic-decay}, Chapman and Krause proved that the ring of integers of a number field
always has non-absolutely irreducible elements unless it is a unique factorization domain. Similarly,
$\IntZ$ is a non-unique factorization domain with non-absolutely irreducible elements. For instance,
the polynomial $f = \frac{x(x^2+3)}{2}$ is not absolutely irreducible in $\IntZ$ since
\begin{equation*}
f^2 = f \cdot f = \frac{x^2(x^2+3)}{4} \cdot (x^2+3).
\end{equation*}

In this paper, we construct non-absolutely
irreducible elements in $\IntZ$, a first step to characterizing them. The constructions we
give serve as a cornerstone for studying patterns of factorizations in $\IntZ$.

The researchers who have studied factorizations in $\IntZ$
have mostly been considering square-free factorizations. For instance, in \cite{Frisch:prescribed-sets-of-lengths},
Frisch showed that $\IntZ$ has wild factorization behavior but the factorizations she used to realize her main
result (Theorem 9 in \cite{Frisch:prescribed-sets-of-lengths}) were all square-free. It is not known whether
$\IntZ$ exhibits similar behavior for non-square-free factorizations.
The study of the non-absolutely irreducible elements of $\IntZ$ will be helpful in
answering such questions.

We first give some necessary definitions and facts in Section \ref{sect-preliminaries}.
In Sections \ref{sect:typeI} and \ref{sect:typeII}, we construct non-absolutely irreducible
elements in $\IntZ$. We then give a construction for patterns of factorizations in
Section \ref{sect:pattern} and finally in Section \ref{sect:gen-results}, we give
generalizations of the examples in Sections \ref{sect:typeI} and \ref{sect:typeII}.

\section{Preliminaries}\label{sect-preliminaries}
This section contains necessary definitions and facts on factorizations and irreducible elements
of $\IntZ$.

\subsection{Factorization terms}
We only define the factorization terms we need in this paper and refer to
\cite{Geroldinger-Halter-Koch:Non-unique-factorizations} for a deeper study of
factorization theory. Let $R$ be a commutative ring with identity and $r, s \in R$
be non-zero non-units.
\begin{enumerate}
 \item We say $r$ is \emph{irreducible} in $R$ if it cannot be written as the product of
 two non-units of $R$.
 \item A \emph{factorization} of $r$ in $R$ is an expression
       \begin{equation}\label{eq:fac}
        r = a_{1}\cdots a_{n}
       \end{equation}
where $a_i$ is irreducible in $R$ for $1\le i \le n$.
\item The \emph{length} of the factorization in \eqref{eq:fac} is the number $n$ of irreducible
factors.
\item We say $r$ and $s$ are \emph{associated} in $R$ if there exists a unit $u \in R$
such that $r = us$.
 \item Two factorizations
       \begin{equation}\label{eq:2-fac-same-diff}
        r = a_{1}\cdots a_{n} = b_{1} \cdots b_{m}
       \end{equation}
are called \emph{essentially the same} if $n = m$ and after some possible reordering,
$a_{j}$ is associated to $b_{j}$ for $1 \leq j \leq m$. Otherwise, the factorizations in
\eqref{eq:2-fac-same-diff} are called \emph{essentially different}.
 \item An element $r \in R$ is said to be \emph{absolutely irreducible} if it is
 irreducible in $R$ and for all natural numbers $n >1$, every factorization of $r^n$
 is essentially the same as $r^n = r \cdots r$. Equivalently,
 $r \in R$ is called \emph{absolutely irreducible} if $r^n$ has exactly one factorization
 up to associates.

If $r$ is irreducible but there exists a natural number $n>1$ such that $r^n$ has other
factorizations essentially different from $r^n = r \cdots r$,
then $r$ is called \emph{non-absolutely irreducible}.
\end{enumerate}

\subsection{Irreducible elements of $\boldsymbol{\IntZ}$}
We begin with some preliminary definitions and facts, and later state a characterization
of irreducible elements of $\IntZ$ which we shall use in this paper.

\begin{definition}
The ring $\IntZ = \{f \in \Q[x] \mid f(\Z) \subseteq \Z\}$ is called
the \emph{ring of integer-valued polynomials}.
\end{definition}

We refer to \cite{Cahen-Chabert:Integer-valued-polynomials}
for a deeper study of integer-valued polynomials.

\begin{definition}\label{def:fixdiv}
\begin{enumerate}
\item Let $f = \sum_{i = 0}^na_ix^i \in \Z[x]$. The \emph{content} of $f$ is the ideal
\begin{equation*}
\cont(f) = (gcd[a_0, a_1, \ldots, a_n])
\end{equation*}
of $\Z$ generated by the coefficients of $f$. The polynomial $f$ is said to be
\emph{primitive} if $\cont(f) = (1) =\Z$.
\item Let $f\in \IntZ$. The \emph{fixed divisor} of $f$ is the ideal
\begin{equation*}
 \fixdiv(f) = (gcd[f(a) \mid a \in \Z])
\end{equation*}
of $\Z$ generated by the elements $f(a)$ with $a\in \Z$. Note that it is sufficient to
consider $0 \leq a \leq deg(f)$, that is,
\begin{equation*}
 \fixdiv(f) = (gcd[f(a) \mid 0 \leq a \leq deg(f)]),
 \end{equation*}
 cf. \cite[Lemma 2.7]{Anderson-Cahen:Some-factorization-properties}.
 The polynomial $f$ is said to be
\emph{image primitive} if $\fixdiv(f) = (1) =\Z$.
\end{enumerate}
\end{definition}

\begin{note}
\begin{enumerate}
\item A polynomial $\frac{g}{b}$ with $g\in \Z[x]$ and $b \in \N$, is in $\IntZ$ if
and only if $b$ divides the fixed divisor $\fixdiv(g)$ of $g$.
\item Let $f \in \Z[x]$ be primitive with degree $n$ and $p \in \Z$ be prime. If $p$
divides the fixed divisor of $f$, then $p \leq n$, cf. for instance \cite[Remark 3]{Frisch:prescribed-sets-of-lengths}.
\end{enumerate}
\end{note}

\begin{remark}\label{remark:irred:intZ}
In analogy to the well known fact that $f \in \Z[x]$ is irreducible in $\Z[x]$ if and only if it
is primitive and irreducible in $\Q[x]$, Chapman and McClain \cite{Chapman-McClain:irrediv-Irreducible-polynomials}
showed that $f \in \Z[x]$ is irreducible in $\IntZ$ if and only if it is image
primitive and irreducible in $\Q[x]$.
This follows as a special case from Remark \ref{remark:irreducible}.
\end{remark}

\begin{note}\label{note:irreducible}
Every non-zero polynomial $f \in \Q[x]$ can be written in a
unique way up to the sign of $a$ and the signs and indexing of the $g_i$ as
\begin{equation*}
f(x) = \frac{a}{b}\prod_{i \in I}g_i(x)
\end{equation*}
with $a \in \Z, b \in \mathbb{N}$ with gcd$(a, b) = 1$, $I$ a non-empty finite set and
for $i \in I$, $g_i$ primitive and irreducible in $\Z[x]$.
\end{note}

\begin{remark}\cite{Frisch:prescribed-sets-of-lengths}\label{remark:irreducible}
 A non-constant polynomial $f \in \IntZ$ written as
in Note \ref{note:irreducible} is irreducible in $\IntZ$ if and only if:
\begin{enumerate}
\item $a = \pm1$,
\item $(b) = \fixdiv(\prod_{i \in I}g_i)$ and
\item there does not exist a partition of $I$ into non-empty subsets $I = I_1 \uplus I_2$
and $b_1, b_2 \in \mathbb{N}$ with $b_1b_2 = b$ and $(b_1) = \fixdiv(\prod_{i \in I_1}g_i),
(b_2) = \fixdiv(\prod_{i \in I_2}g_i)$.
\end{enumerate}
\end{remark}

\section{Non-absolutely irreducibles: different factorizations of the same length}\label{sect:typeI}
In this section we construct non-absolutely irreducible elements $r$ such that for all $n>1$, the
factorizations of $r^n$ are all of the same length.

Consider the irreducible polynomial
\begin{equation*}
f = \frac{x(x-4)(x^2+3)}{4} \in \IntZ.
\end{equation*}
It can be checked easily that
\begin{equation*}
\fixdiv(x(x-4)(x^2+3)) = (4) = \fixdiv(x^2(x^2+3)) = \fixdiv((x-4)^2(x^2+3)).
\end{equation*}
Furthermore, the polynomials $\frac{x^2(x^2+3)}{4}$ and $\frac{(x-4)^2(x^2+3)}{4}$
are irreducible in $\IntZ$ by Remark \ref{remark:irreducible}. Thus
\begin{equation*}
f^2 = \frac{x^2(x^2+3)}{4} \cdot \frac{(x-4)^2(x^2+3)}{4}
\end{equation*}
is a factorization of $f^2$ and it is essentially different from $f \cdot f$.
Therefore
\begin{equation*}
f = \frac{x(x-4)(x^2+3)}{4}
\end{equation*}
is not absolutely irreducible in $\IntZ$.

More generally, we have the following construction.

\begin{example}\label{exam:typeI}
Let $p$ be an odd prime and $n > 1$ a natural number. Let
\begin{equation*}
h(x) = x^{p^{n-1}(p-1)} - q
\end{equation*}
where $q$ is a prime congruent to $1$ mod $p^{n+1}$ and $q > p^{n-1}(p-1) + n$.
Then $h$ is irreducible in $\Q[x]$ by Eisenstein's irreducibility criterion.

Furthermore, $\val_p(h(u)) \geq n$ for all integers
$u$ not divisible by $p$ since the group of units of $\Z/p^{n}\Z$ is cyclic of order $p^{n-1}(p-1)$.
Moreover, if $r$ is a generator of the group of units of $\Z/p^{n+1}\Z$, then, since $h(r)$ is not zero modulo $p^{n+1}$,
it follows that $\val_p(h(r))= n$. Therefore the minimum $\val_p(h(u))$ for $u$ an integer
not divisible by $p$ is exactly $n$.

Now let $a_1,\ldots, a_n$ be integers divisible by $p$, not representing all
residue classes of $p^2$ that are divisible by $p$ and such that no
$a_i$ (for $1 \leq i \leq n$) is congruent to 0 modulo any prime
$l \leq p^{n-1}(p-1) + n$, $l \neq p$.

We set
\begin{equation*}
f(x)= \frac{h(x)\prod_{i=1}^n (x-a_i)}{p^n}.
\end{equation*}
By the choice of the integers $a_1,\ldots, a_n$, the minimum
$\val_p\left(\prod_{i=1}^n(w-a_i)\right)$ for $w \in p\Z$ is exactly $n$.
Moreover, for each prime $l \leq p^{n-1}(p-1) + n$, $l \neq p$,
$l$ does not divide the fixed divisor of the numerator $h(x)\prod_{i=1}^n (x-a_i)$ of $f(x)$.
Because of these facts and by Remark \ref{remark:irreducible}, $f(x)$ is in $\IntZ$ and it is
irreducible in $\IntZ$.

Now suppose $a_1,\ldots, a_n$ contains at least two
different elements. Then for $k>1$, $f^k$ has factorizations essentially
different from $f \cdots f$.
All of these factorizations have length $k$.

For example, without loss of generality,
let $a_1$ and $a_2$ be different. Then
\begin{equation*}
    f^k= \frac{h(x)(x-a_1)^2\prod_{i=3}^n(x-a_i)}{p^n}
    \cdot \frac{h(x)(x-a_2)^2\prod_{i=3}^n(x-a_i)}{p^n}
    \cdot\underbrace{f \cdots f}_{k-2 \text{ copies}}
\end{equation*}
is a factorization of $f^k$ essentially different from
$f \cdots f$.
\end{example}

\begin{remark}\label{rem:typeI}
In Example \ref{exam:typeI}, we could use a different polynomial $h(x)$, namely:
\begin{equation*}
h(x) = c(x)d(x)
\end{equation*}
where
\begin{eqnarray*}
  c(x) &=& x^{\frac{p^{n-1}(p-1)}{2}} - q \\
  d(x) &=& x^{\frac{p^{n-1}(p-1)}{2}} - r
\end{eqnarray*}
with $q$ and $r$ primes congruent to $1$ and $-1$ respectively,
mod $p^{n+1}$ and both $q, r$ greater than $p^{n-1}(p-1) + n$.
Similarly, both $c(x)$ and $d(x)$ are irreducible in $\Q[x]$ by
Eisenstein's irreducibility criterion.

Furthermore, if $u$ is a unit mod $p^n$, then $\val_p(c(u)) \geq n$ iff
$u$ is a square mod $p^n$ and $\val_p(d(u)) \geq n$ iff
$u$ is a non-square mod $p^n$. Also, if $r$ is a generator of
the group of units of $\Z/p^{n+1}\Z$, then $\val_p(d(r))= n$ and $\val_p(c(r))= 0$.
Therefore the minimum $\val_p(c(u)d(u))$ for $u$ an integer not divisible by $p$
is exactly $n$.

The construction involving two polynomials $c(x)$ and $d(x)$ can be used to
exhibit factorizations of different lengths of a power of an irreducible
polynomial, cf. Example \ref{exam:typeII}.
\end{remark}

\begin{note}
In Example \ref{exam:typeI}, we have one prime in the denominator but this
can be extended to several primes. For instance, if we allow some $a_i$ to be
congruent to 0 modulo other primes
$l < p^{n-1}(p-1) + n$, then the roots of the numerator of $f$ can contain
a complete set of residues modulo some $l$.
More specifically, we have the following example.
\end{note}

\begin{example}
Let $p, q$ be distinct odd primes, and let $n \geq 2q$ be a
natural number. Let
\begin{equation*}
h(x) = x^{p^{n-1}(p-1)} - r
\end{equation*}
where $r$ is a prime congruent to $1$ mod $p^{n+1}$ and
$r > p^{n-1}(p-1) + n$. Then $h$ is irreducible in $\Q[x]$ and
the minimum $\val_p(h(u))$ for $u$ an integer not divisible by
$p$ is $n$.

Let $a_1,\ldots, a_n$ be integers divisible by $p$, not
representing all residue classes of $p^2$ that are divisible by $p$,
and such that:
\begin{enumerate}
\item $a_1,\ldots, a_q$ is a complete system of residues mod $q$, and
the remaining $a_i$ with $i>q$ are all congruent to $1$ mod $q$.
\item For $1 \leq i \leq n$, $a_i \not \equiv 0 \Mod{l}$ for all primes
$l < p^{n-1}(p-1) + n$, $l \neq p, q$.
\end{enumerate}
Set
\begin{equation*}
    f(x) = \frac{h(x)\prod_{i=1}^n (x-a_i)}{q p^n}.
\end{equation*}

Then $f$ is irreducible in $\IntZ$ by Remark \ref{remark:irreducible}
and $f^2$ has a factorization essentially different from $f \cdot f$, namely;

\begin{equation*}
    f^2 =
\frac{h(x)\prod_{i=1}^q (x-a_i)^2 \prod_{i=2q+1}^n (x-a_{i})}{q^2p^n}
\cdot \frac{h(x)\prod_{i=q+1}^{2q}(x-a_{i})^2 \prod_{i=2q+1}^n (x-a_{i})}{p^n}.
\end{equation*}
\end{example}

Also in the spirit of Example \ref{exam:typeI}, we have the following example
involving two primes.

\begin{example}
Let $q < p$ be odd primes, and let $1 < m \leq n$ be natural numbers. Let
\begin{equation*}
t = lcm(q^{m-1}(q-1), p^{n-1}(p-1)).
\end{equation*}
 We set
\begin{equation*}
h(x) = x^{t} - r
\end{equation*}
where $r$ is a prime congruent to $1$ mod $p^{n+1}q^{m+1}$ and
$r > t + n$. Then $h$ is irreducible in $\Q[x]$ and
$\val_p(h(u)) \geq n$ for all integers $u$ not divisible by
$p$, and $\val_q(h(w)) \geq m$ for all integers $w$ not divisible by
$q$.

Now let $a_1,\ldots, a_n$ be integers divisible by $p$ but
not representing all residue classes of $p^2$ that are divisible
by $p$, and such that:

\begin{enumerate}
\item $a_1,\ldots, a_m$ are divisible by $q$ but not representing all
residue classes of $q^2$ that are divisible by $q$, and
the remaining $a_i$ with $i>m$ are all congruent to $1$ mod $q$.
\item For $1 \leq i \leq n$, $a_i \not \equiv 0 \Mod{l}$ for all primes
$l < t + n$, $l \neq p, q$.
\end{enumerate}

We set
\begin{equation*}
f(x)= \frac{h(x)\prod_{i=1}^n (x-a_i)}{p^nq^m}.
\end{equation*}
Then $f$ is irreducible in $\IntZ$ by Remark \ref{remark:irreducible} and if
$a_1,\ldots, a_m$ or $a_{m+1},\ldots, a_n$ contains at least two different elements,
then for some $k>1$, $f^k$ has a factorization essentially different from
$f \cdots f$.

For instance, without loss of generality let $a_1$ and $a_2$ be different. Then
\begin{equation*}
    f^2= \frac{h(x)(x-a_1)^2\prod_{i=3}^n(x-a_i)}{p^nq^m}
    \cdot\frac{h(x)(x-a_2)^2\prod_{i=3}^n(x-a_i)}{p^nq^m}
\end{equation*}
is a factorization of $f^2$ essentially different from $f \cdot f$. Similarly, if
$a_{m+1}$ and $a_{m+2}$ are different, then

\begin{equation*}
    f^2= \frac{h(x)g(x)(x-a_{m+1})^2}{p^nq^m}
    \cdot\frac{h(x)g(x)(x-a_{m+2})^2}{p^nq^m}
\end{equation*}
where $g(x) = \prod_{i=1}^m(x-a_i)\prod_{i=m+3}^n(x-a_i)$, is a factorization of
$f^2$ essentially different from $f \cdot f$.
\end{example}

\section{Non-absolutely irreducibles: factorizations of different lengths}\label{sect:typeII}
Here we construct non-absolutely irreducible elements $r$ such that for some $n>1$, some
factorizations of $r^n$ have different lengths.

Consider the irreducible polynomial
\begin{equation*}
f = \frac{(x-3)(x^3-17)(x^3-19)}{3} \in \IntZ.
\end{equation*}
Then
\begin{equation*}
f^2 = \frac{(x-3)^2(x^3-17)(x^3-19)}{9} \cdot (x^3-17)(x^3-19)
\end{equation*}
is a factorization of $f^2$ essentially different from $f \cdot f$.
This results from
\begin{equation*}
gcd(a^3 -17 \mid a \in \{2+3\Z\}) = gcd(a^3 -19 \mid a \in \{1+3\Z\}) = 9
\end{equation*}
such that for all $a \not \equiv 0 \Mod{3}$, $(a^3 -17)(a^3 -19)$ is divisible by $9$.

This behaviour motivates the next example and more generally Lemma \ref{lemma:type2_non-abs}.

\begin{example}\label{exam:typeII}
Let $p$ be an odd prime and $n > m$ be natural numbers.
We set
\begin{eqnarray*}
  c(x) &=& x^{\frac{p^{n-1}(p-1)}{2}} - q \\
  d(x) &=& x^{\frac{p^{n-1}(p-1)}{2}} - r
\end{eqnarray*}
where $q$ and $r$ are primes congruent to $1$ and $-1$ respectively,
mod $p^{n+1}$ and both $q, r$ are greater than $p^{n-1}(p-1) + m$.
Then both $c(x)$ and $d(x)$ are irreducible in $\Q[x]$ by
Eisenstein's irreducibility criterion.

Furthermore, if $u$ is a unit mod $p^n$, then $\val_p(c(u)) \geq n$ iff
$u$ is a square mod $p^n$ and $\val_p(d(u)) \geq n$ iff
$u$ is a non-square mod $p^n$.
Note that both $c(x)$ and $d(x)$ are irreducible in
$\IntZ$ by Remark \ref{remark:irred:intZ}
because, being primitive, they are irreducible in $\Z[x]$ and
$\fixdiv(c(x)) = \fixdiv(d(x)) = (1)$. Furthermore, $\fixdiv(c(x)d(x)) = (1)$.

Now let $a_1,\ldots, a_m$ be integers divisible by $p$, not representing all
residue classes of $p^2$ that are divisible by $p$ and such that no $a_i$
(for $1 \leq i \leq m$) is congruent to 0 modulo any prime
$l \leq p^{n-1}(p-1) + m$, $l \neq p$.

Set
\begin{equation*}
f(x)= \frac{c(x)d(x)\prod_{i=1}^m (x-a_i)}{p^m}.
\end{equation*}
Then $f$ is irreducible in $\IntZ$ by Remark \ref{remark:irreducible}.
Now irrespective of all $a_1,\ldots, a_m$ being the same or different,
\begin{equation*}
    f^n= \prod_{i=1}^m\frac{c(x)d(x)(x-a_i)^n}{p^{n}}\cdot c(x)^{n-m}d(x)^{n-m}
\end{equation*}
is a factorization of $f^n$ essentially different from
$\underbrace{f \cdots f}_{n \text{ copies}}$.
\end{example}

Note that $f^k$ can have factorizations essentially
different from $f \cdots f$ also for $k < n$, see for example
the proof of Lemma \ref{lemma:type2_non-abs}.

For our next general example, we begin with the following motivation.

\begin{example}\label{exam:typeIIc}
Consider the irreducible polynomial
\begin{equation*}
f = \frac{(x^4 + x^3 +8)(x-3)}{4} \in \IntZ.
\end{equation*}
It can easily be checked that
\begin{equation*}
gcd(a^4 + a^3 +8 \mid a \in \{0+2\Z\}) = 8
\end{equation*}
and
\begin{equation*}
gcd(a^4 + a^3 +8 \mid a \in \{1+2\Z\}) = 2.
\end{equation*}
Thus $0$ and $1$ are both roots mod 2 of $x^4 + x^3 +8$ and
for all $a \in \{0+2\Z\}$, $a^4 + a^3 +8$ is divisible by 8.

Therefore
\begin{equation*}\label{fact:typeIIc}
f^2 = \frac{(x^4 + x^3 +8)(x-3)^2}{8} \cdot \frac{(x^4 + x^3 +8)}{2}
\end{equation*}
is a factorization of $f^2$ essentially different from $f \cdot f$.
\end{example}

We need the following lemma for our general example.

\begin{lemma}\cite[Lemma 6]{Frisch:prescribed-sets-of-lengths},
\cite[ Lemma 3.3]{Frisch-Nakato-Rissner:sets-of-lengths}\label{lemma:replacements}
Let $I \neq \emptyset$ be a finite set and $f_i\in \Z[x]$ be monic polynomials
for $i\in I$. Then there exist monic polynomials $F_i \in \Z[x]$ for $i \in I$,
such that
\begin{enumerate}
\item $\deg(F_i) = \deg(f_i)$ for all $i\in I$,
\item the polynomials $F_i$ are irreducible in $\Q[x]$ and
   pairwise non-associated in $\Q[x]$ and
\item for all subsets $J\subseteq I$ and all partitions
  $J = J_1 \uplus J_2$,
  \begin{equation*}
    \fixdiv\left(\prod_{j\in J_1} f_j\prod_{j\in J_2}F_j\right)
    = \fixdiv\left(\prod_{j\in J} f_j\right).
 \end{equation*}
 \label{repl-prop-3}
\end{enumerate}
\end{lemma}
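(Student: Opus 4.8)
The plan is to construct the polynomials $F_i$ one prime at a time and then glue the local data together via the Chinese Remainder Theorem. The key observation is that the fixed divisor $\fixdiv(\prod_{j} g_j)$ of a product of monic integer polynomials is determined, prime by prime, by the multiset of residues modulo prime powers that the $g_j$ take on $\Z$; more precisely, for a prime $\ell$, the $\ell$-adic valuation of $\fixdiv(\prod_j g_j)$ is $\min_{a \in \Z} \sum_j \val_\ell(g_j(a))$. Hence, to guarantee condition (iii) for \emph{all} subsets $J$ and \emph{all} partitions $J = J_1 \uplus J_2$ simultaneously, it suffices to arrange that for every prime $\ell$ and every $a \in \Z$ one has $\val_\ell(F_j(a)) = \val_\ell(f_j(a))$ for each individual $j \in I$ — that is, $F_j$ and $f_j$ agree with respect to every prime valuation at every integer argument. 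If this holds termwise, then replacing any subset of the $f_j$ by the corresponding $F_j$ inside any product leaves every local valuation profile, hence the fixed divisor, unchanged.

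First I would note that only finitely many primes $\ell$ are relevant: since each $f_j$ is monic of fixed degree, a prime $\ell$ can divide $\fixdiv(f_j)$ (or more generally affect any of the valuations that enter a fixed-divisor computation up to a bounded power) only if $\ell$ is at most $\max_j \deg f_j$, and moreover $\val_\ell(f_j(a))$ can only be large on residue classes of $a$ modulo a bounded power of $\ell$. So fix a modulus $M = \prod_{\ell \le D} \ell^{e_\ell}$ large enough (with $D = \max_j \deg f_j$ and $e_\ell$ chosen past the stabilization point of all the relevant valuations) so that the behaviour of any $f_j$ with respect to fixed divisors is entirely captured by the residues $f_j(a) \bmod M$ for $a$ ranging over $\Z/M\Z$. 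Then I would seek monic $F_j \in \Z[x]$ with $\deg F_j = \deg f_j$ and $F_j \equiv f_j \pmod{M}$ as polynomials (i.e. coefficientwise), which forces $F_j(a) \equiv f_j(a) \pmod M$ for all $a \in \Z$ and therefore preserves all the local valuation data exactly, giving (i) and (iii).

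It remains to secure condition (ii): the $F_j$ should be irreducible in $\Q[x]$ and pairwise non-associated. This is where a Hilbert-irreducibility or an explicit Eisenstein-type argument enters, and it is the step I expect to be the main obstacle — everything before it is bookkeeping. The cleanest route is to impose, in addition to $F_j \equiv f_j \pmod M$, a further congruence modulo a prime $p$ coprime to $M$ chosen so that modulo $p$ each $F_j$ becomes Eisenstein after a shift, or more robustly to invoke that within the congruence class $\{G \in \Z[x] : G \text{ monic}, \deg G = \deg f_j, G \equiv f_j \pmod M\}$ — an infinite arithmetic progression of polynomials — irreducible polynomials are Zariski-dense (a standard consequence of Hilbert's irreducibility theorem, since the family is parametrized by the free coefficients), so we may pick each $F_j$ irreducible over $\Q$. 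Pairwise non-association is then arranged greedily: having chosen $F_1, \ldots, F_{j-1}$, there are infinitely many valid choices for $F_j$ and only finitely many that could be associated (in fact equal, since all are monic) to an earlier one, so a good choice exists. Since the $\deg f_j$ may coincide we cannot separate the $F_j$ by degree, which is exactly why this last greedy step — rather than the congruence construction — carries the real content; but it goes through because each congruence class is infinite. Combining the three ingredients yields polynomials $F_i$ with all the required properties.
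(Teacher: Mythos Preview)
The paper does not prove this lemma; it is quoted without proof from \cite{Frisch:prescribed-sets-of-lengths} and \cite{Frisch-Nakato-Rissner:sets-of-lengths}, so there is no in-paper argument to compare against. Your outline is in fact essentially the approach of those references: freeze all the fixed-divisor data by a coefficientwise congruence $F_j \equiv f_j \pmod{M}$, force irreducibility via an Eisenstein condition at an auxiliary prime coprime to $M$, and separate the $F_j$ greedily.

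Two points need repair. First, the bound $D = \max_j \deg f_j$ for the relevant primes is too small: a prime $\ell$ with $D < \ell \le \sum_{i\in I} \deg f_i$ can still divide the fixed divisor of a product $\prod_{j\in J} f_j$. Take instead $D = N := \sum_{i\in I}\deg f_i$. Second, and more seriously, the claim that $F_j \equiv f_j \pmod M$ forces $\val_\ell(F_j(a)) = \val_\ell(f_j(a))$ for \emph{all} $a$ is false as stated: if $f_j$ has an integer root $a_0$ then $\val_\ell(f_j(a_0)) = \infty$, whereas an irreducible $F_j$ of degree $\ge 2$ has no integer root. What the congruence actually gives is that the two valuations agree whenever either is below the threshold $e_\ell$, and are both $\ge e_\ell$ otherwise. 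This weaker statement still suffices: choose $e_\ell > \val_\ell(N!)$; since every product under consideration is monic of degree at most $N$, its fixed divisor divides $N!$, so the minimum $\min_a \sum_j \val_\ell(\,\cdot\,)$ is strictly below $e_\ell$ and hence attained at some $a$ where every individual summand is below $e_\ell$, and at such $a$ the replacement changes nothing. With these two adjustments your argument goes through.
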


\begin{example}\label{exam:typeIIb}
Let $p>3$ be a prime number and let $a_1,\ldots, a_p$ be a complete set of
residues mod $p$ that does not contain a complete set of residues
mod any prime $q<p$. Let

\begin{eqnarray*}
g_1 = (x-a_2)^2(x-a_3)^2\prod_{i=4}^{p}(x-a_i)\\
g_2 = (x-a_1)^2(x-a_3)^2\prod_{i=4}^{p}(x-a_i)\\
g_3 = (x-a_1)^2(x-a_2)^2\prod_{i=4}^{p}(x-a_i).
\end{eqnarray*}

By Lemma \ref{lemma:replacements}, we find polynomials $G_1, G_2, G_3$, of
the same degree as $g_1, g_2, g_3$ respectively, irreducible in $\Q[x]$
and pairwise non-associated in $\Q[x]$ such that for any product $P$ of
polynomials from among the $g_i$ and any product $Q$ that differs from $P$
in that some of the $g_i$ have been replaced by their respective $G_i$,
we have $\fixdiv(P) = \fixdiv(Q)$.

Let $e_p(g) = \val_p(\fixdiv(g))$ denote the exponent of $p$ in the fixed
divisor of $g$. Now note that for each index $i$, $e_p(G_i)=0$ and
for any two different indices $i,j$, $e_p(G_i G_j)=2$, and, finally,
$e_p(G_1 G_2 G_3)=3$.

This shows that
\begin{equation*}
f = \frac{G_1 G_2 G_3}{p^3}
\end{equation*}
is in $\IntZ$ and is irreducible in $\IntZ$, and that $f^2$ factors as
\begin{equation}\label{fact:type2}
f^2 =\frac{G_1 G_2}{p^2} \cdot \frac{G_2 G_3}{p^2} \cdot \frac{G_3 G_1}{p^2},
\end{equation}
which factorization is essentially different from $f \cdot f$. Thus $f$ is not
absolutely irreducible.
\end{example}

Note that in the above example, $p$ divides the fixed divisor of $G_iG_j$ for $i \neq j$
and
\begin{equation*}
\val_p\left(gcd\left(G_i(a)G_j(a)\Biggm\rvert a \equiv a_k \Mod{p}, k\neq i,j \right)\right)
= 4 > \val_p(\fixdiv(f)).
\end{equation*}
This behaviour is similar to the one in Example \ref{exam:typeIIc} and more generally in
Lemma \ref{lemma:type2i_non-abs}.

\begin{remark}
\begin{enumerate}
\item Like Example \ref{exam:typeI}, the constructions in this section can be
extended to several primes in the denominator of $f$.
\item In Example \ref{exam:typeII}, if we employ the usual
\begin{equation*}
h(x) = x^{p^{n-1}(p-1)} - q
\end{equation*}
where $q$ is a prime congruent to $1$ mod $p^{n+1}$ and $q > p^{n-1}(p-1) + n$,
instead of $c(x)d(x)$, $f$ remains non-absolutely irreducible but the
factorizations of $f^k$ all have the same length $k$.
\item In the examples we have given in this section, the factorizations of $f^k$ have
length greater than or equal to $k$ but we can also have
non-absolutely irreducibles $f$ in $\IntZ$ such that for some $k>2$, some factorizations of
$f^k$ have length less than $k$. For instance, consider the polynomial
\begin{equation*}
f = \frac{(x^2+4)(x^4 +7)}{4} \in \IntZ.
\end{equation*}
It is clearly irreducible and
\begin{equation*}
f^3 = \frac{(x^2+4)^3(x^4 +7)^2}{64} \cdot (x^4 +7)
\end{equation*}
is a factorization of $f^3$ essentially different from $f \cdot f \cdot f$ and it is of length 2.
\end{enumerate}
\end{remark}

\section{Patterns of factorizations}\label{sect:pattern}
The researchers who have studied factorizations in $\IntZ$
have mostly been considering square-free factorizations. For instance, in \cite{Frisch:prescribed-sets-of-lengths},
Frisch showed that $\IntZ$ has wild factorization behavior but the factorizations she used to realize her main
result (Theorem 9 in \cite{Frisch:prescribed-sets-of-lengths}) were all square-free. It is not known whether
$\IntZ$ exhibits similar behavior for non-square-free factorizations.
The study of non-absolutely irreducible elements lays a foundation for
studying patterns of factorizations.

As a first step to understanding patterns of factorizations in $\IntZ$, we give a construction using the
examples in Sections \ref{sect:typeI} and \ref{sect:typeII}. We begin with a motivation.

\begin{definition}\label{Defin:partition}
Let $R$ be a commutative ring with identity and $r \in R$ be a nonzero non-unit.
\begin{enumerate}
 \item A sequence of natural numbers $\lambda = (k_1,\ldots, k_s)$ is called a
 \emph{partition of a natural number} $n$ if $ k_1+ \cdots+ k_s = n$ with
 $k_1\ge k_2\ge \cdots\ge k_s>0$. The natural numbers $k_1, \ldots, k_s$ are called \emph{blocks}.
 \item If $\lambda = (k_1,\ldots, k_s)$ is a partition, we say a factorization of $r$ is of
 \emph{type} $\lambda$ if $r = a_1^{k_1} \cdots a_s^{k_s}$ for pairwise non-associated
 irreducible elements $a_1,\dots, a_s \in R$.
\end{enumerate}
\end{definition}

\begin{example}Consider the different partitions of 4:
\begin{equation*}
\{(4), (3,1), (2,2), (2,1,1), (1,1,1,1)\}.
 \end{equation*}
The polynomial
\begin{equation*}
f = \frac{(x^{8}-17)^4(x-4)^2(x-8)^2}{2^4} \in \IntZ
\end{equation*}
gives us factorizations of type $\lambda$ for partitions $\lambda$ of 4
other than (4):
 \begin{eqnarray*}
   f &=&(x^{8}-17)^3 \cdot \frac{(x^{8}-17)(x-4)^2(x-8)^2}{2^4} \\
     &=&(x^{8}-17)^2 \cdot \left(\frac{(x^{8}-17)(x-4)(x-8)}{2^2}\right)^2  \\
     &=&(x^{8}-17)^2 \cdot \frac{(x^{8}-17)(x-4)^2}{2^2} \cdot \frac{(x^{8}-17)(x-8)^2}{2^2}\\
     &=&\frac{(x^{8}-17)(x-4)}{2} \cdot \frac{(x^{8}-17)(x-8)}{2}
     \cdot \frac{(x^{8}-17)(x-4)(x-8)}{2^2} \cdot (x^{8}-17).
\end{eqnarray*}
\end{example}
Note, however, that $f$ has factorizations other than those above.
For example,
\begin{equation*}
f = (x^{8}-17)^2 \cdot \frac{(x^{8}-17)(x-4)^2(x-8)}{2^3} \cdot \frac{(x^{8}-17)(x-8)}{2}
\end{equation*}
is another factorization of $f$ essentially different from the above.

More generally, we have the following construction for patterns of factorizations in $\IntZ$.
We first give a remark.

\begin{remark}
In the following example, we use partition of sets; if a set $S$ is the disjoint union of $m$
non-empty subsets $B_1,\ldots, B_m$, then, we call $B = \{B_1,\ldots, B_m\}$ a partition of $S$.
This should not be confused with the concept of partition of a number as defined in Definition \ref{Defin:partition}.
\end{remark}

\begin{example}\label{example-pattern}
Let $p \in \Z$ be an odd prime and $n, s, t > 1$ natural numbers. Set
\begin{eqnarray*}
  c_i(x) &=& x^{\frac{p^{n-1}(p-1)}{2}} - q_i \\
  d_i(x) &=& x^{\frac{p^{n-1}(p-1)}{2}} - r_i
\end{eqnarray*}
where $q_1,\ldots, q_s$ are primes congruent to $1$ modulo $p^{n+1}$,
$r_1, \ldots, r_t$ are primes congruent to $-1$ modulo $p^{n+1}$ and
for all $1 \leq i \leq s$ and $1 \leq j \leq t$,
$q_i, r_j >p^{n-1}(p-1) + n$.

Now let $a_1,\ldots, a_n$ be integers divisible by $p$, not representing
all residue classes of $p^2$ that are divisible by $p$ and such that no
$a_j$ (for $1 \leq j \leq n$) is congruent to 0 modulo any prime
$l \leq p^{n-1}(p-1) + n$, $l \neq p$. Set
\begin{equation*}
   G(x) = \frac{\prod_{i=1}^s c_i(x)\prod_{i=1}^t d_i(x) \prod_{j=1}^n (x-a_j)}{p^n}.
\end{equation*}

Then every factorization of $G$ in $\IntZ$ corresponds to a triple $(B, \theta, \sigma)$
where:
\begin{enumerate}
\item $B$ is a partition of the set $\{1,\ldots, n\}$ into $m_B$ blocks $B_1,\ldots, B_{m_B}$,
\item $\theta$ is an injective function $\theta:\{1,\ldots, m_B\} \rightarrow \{1,\dots, s\}$ and
\item $\sigma$ is an injective function $\sigma:\{1,\ldots, m_B\} \rightarrow \{1,\dots, t\}$.
\end{enumerate}

Given such a triple, for $1 \leq i \leq m_B$, we construct a polynomial $g_i$
corresponding to the $i$-th block. Suppose the $i$-th block consists of $w_i$ elements.
We set

\begin{equation*}
g_i = \frac{c_{\theta(i)}(x)d_{\sigma(i)}(x)\prod_{j \in B_i}(x-a_j)}{p^{w_i}}.
\end{equation*}
Then $g_i$ is irreducible in $\IntZ$ by Remark \ref{remark:irreducible}.

Furthermore, each factorization of $G$ is of the form
 \begin{equation}\label{factorization:pattern}
G = \prod_{i=1}^{m_B}g_i \cdot \prod_{j\not \in \text{ Im }\theta}c_j(x) \cdot
\prod_{k \not \in \text{ Im }\sigma}d_k(x).
\end{equation}
Note that the length of the factorization in \eqref{factorization:pattern} is $s+t-m_B$.
\end{example}

\section{Generalizations}\label{sect:gen-results}
In this section we give lemmas generalizing the examples in Sections \ref{sect:typeI}
and \ref{sect:typeII}.
We begin with a generalization of Example \ref{exam:typeI}.

\begin{definition}
\begin{enumerate}
\item Let $I \neq \emptyset$ be a finite set and for $i\in I$,
let $g_i\in \Z[x]$ be primitive and irreducible in $\Z[x]$. Let
\begin{equation*}
f(x) = \frac{\prod_{i \in I}g_i(x)}{b} \in \IntZ
\end{equation*}
be irreducible in $\IntZ$ where $b > 1$ is a natural number.
 We call non-empty subsets $J_1, J_2 \subsetneqq I$ \emph{interchangeable} if
 $J_1 \cap J_2 = \emptyset$ and
 \begin{equation*}
 \fixdiv\Bigg(\prod_{i \in J_1}g_i(x)\cdot \prod_{i \in I\setminus J_2}g_i(x)\Bigg)
 = \fixdiv\Bigg(\prod_{i \in J_2}g_i(x)\cdot \prod_{i \in I\setminus J_1}g_i(x)\Bigg) = (b).
\end{equation*}
\item We call two non-empty disjoint index sets $J_1, J_2 \subsetneqq I$ \emph{element-disjoint}
if
\begin{equation*}
\{g_i \mid i \in J_1\} \cap \{g_j \mid j \in J_2\} = \emptyset.
\end{equation*}
\end{enumerate}
\end{definition}

\begin{example}
Consider the irreducible polynomial
\begin{equation*}
f = \frac{(x-1)(x-3)(x^2+4)}{4} \in \IntZ.
\end{equation*}
A quick check shows that
\begin{equation*}
\fixdiv((x-1)^2(x^2+4)) = \fixdiv((x-3)^2(x^2+4)) = (4).
\end{equation*}
Thus setting $g_1 = x-1, g_2 = x-3$ and $g_3 = x^2+4$, we see that the subsets
$J_1 = \{1\}$ and $J_2 = \{2\}$ of $I = \{1, 2, 3\}$ are interchangeable.
Furthermore, $J_1$ and $J_2$ are element-disjoint since they contain different elements.
\end{example}

\begin{lemma}\label{lemma:type1_non-abs}
Let $f(x) = \frac{\prod_{i \in I}g_i(x)}{b} \in \IntZ$
be irreducible in $\IntZ$, where $b > 1$ is a natural number, $I \neq \emptyset$
is a finite set and for $i\in I$, $g_i\in \Z[x]$ is primitive and
irreducible in $\Z[x]$.

If there exist two element-disjoint interchangeable
subsets $J_1, J_2 \subsetneqq I$, then $f$ is not absolutely irreducible.
\end{lemma}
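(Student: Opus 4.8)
The plan is to exhibit, for $n=2$, a factorization of $f^{2}$ that is essentially different from $f\cdot f$; since $f$ is assumed irreducible in $\IntZ$, this will immediately show that $f$ is non-absolutely irreducible. Set
\begin{equation*}
P_{1}=\frac{\prod_{i\in J_{1}}g_{i}(x)\cdot\prod_{i\in I\setminus J_{2}}g_{i}(x)}{b},\qquad P_{2}=\frac{\prod_{i\in J_{2}}g_{i}(x)\cdot\prod_{i\in I\setminus J_{1}}g_{i}(x)}{b}.
\end{equation*}
Since $J_{1}$ and $I\setminus J_{1}$ partition $I$, and likewise $J_{2}$ and $I\setminus J_{2}$, reordering the four products shows that the product of the two numerators equals $\bigl(\prod_{i\in I}g_{i}\bigr)^{2}$ while the two denominators multiply to $b^{2}$; hence $P_{1}P_{2}=f^{2}$.

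First I would check that $P_{1},P_{2}\in\IntZ$ and are non-units. Their numerators are products of primitive polynomials, hence primitive by Gauss's lemma, and the definition of \emph{interchangeable} says precisely that the fixed divisor of each numerator equals $(b)$; in particular $b$ divides it, so $P_{1},P_{2}\in\IntZ$ by the Note following Definition~\ref{def:fixdiv}. Every $g_{i}$ has positive degree (a constant primitive polynomial is $\pm1$, a unit of $\Z[x]$, hence not irreducible), and $J_{1},J_{2}\neq\emptyset$, so $\deg P_{1},\deg P_{2}\geq 1$; as the units of $\IntZ$ are exactly $\pm1$, neither $P_{i}$ is a unit.

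The crucial point is that $P_{1}$ is not associated to $f$ in $\IntZ$, and this is exactly where \emph{element-disjointness} enters. As the units are $\pm1$, if $P_{1}$ were associated to $f$ then $P_{1}=\pm f$, i.e.\ the numerators would agree up to sign in $\Z[x]$; cancelling the common factors $\prod_{i\in J_{1}}g_{i}$ and $\prod_{i\in I\setminus(J_{1}\cup J_{2})}g_{i}$ in the domain $\Z[x]$ leaves $\prod_{i\in J_{1}}g_{i}=\pm\prod_{i\in J_{2}}g_{i}$. By unique factorization in $\Z[x]$ and $J_{1}\neq\emptyset$, some $g_{i}$ with $i\in J_{1}$ would then be associated to some $g_{j}$ with $j\in J_{2}$, contradicting that $J_{1}$ and $J_{2}$ are element-disjoint (where we regard the $g_{i}$ as fixed primitive irreducible representatives). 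Hence $P_{1}$ is not associated to $f$.

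Finally, write $P_{1}$ and $P_{2}$ as products of irreducibles of $\IntZ$ -- possible since $\IntZ$ is atomic -- and concatenate to obtain a factorization of $f^{2}=P_{1}P_{2}$, say of length $\ell\geq 2$. If $\ell>2$, it is essentially different from $f\cdot f$ by length alone. If $\ell=2$, then $P_{1}$ and $P_{2}$ are themselves irreducible, and for $f^{2}=P_{1}P_{2}$ to be essentially the same as $f\cdot f$ one would need $P_{1}$ associated to $f$, which we have ruled out. In either case $f^{2}$ admits a factorization essentially different from $f\cdot f$, so $f$ is not absolutely irreducible. I expect the only real obstacle to be the bookkeeping in this last step: the point worth noticing is that one need \emph{not} prove $P_{1},P_{2}$ irreducible, since the length dichotomy reduces everything to the single case that element-disjointness excludes; the other mild care needed is the sign issue in the previous paragraph.
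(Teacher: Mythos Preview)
Your proof is correct and follows essentially the same approach as the paper: define $P_1,P_2$ exactly as you do, observe $P_1P_2=f^2$, and use interchangeability to place them in $\IntZ$. The paper's proof is much terser---it simply writes down $f^k=P_1P_2 f^{k-2}$ and asserts this yields a factorization essentially different from $f\cdots f$---whereas you carefully supply the details the paper leaves implicit (integer-valuedness, non-unit, the role of element-disjointness in showing $P_1\not\sim f$, and the length dichotomy after refining via atomicity).
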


\begin{proof}
Suppose $J_1, J_2 \subsetneqq I$ are element-disjoint and interchangeable.
Then for $k \geq 2,$
\begin{equation*}
f^k = \frac{\prod_{i \in J_1}g_i(x)\prod_{i \in I\setminus J_2}g_i(x)}{b}
\frac{\prod_{i \in J_2}g_i(x)\prod_{i \in I\setminus J_1}g_i(x)}{b}
\underbrace{\frac{\prod_{i \in I}g_i(x)}{b} \cdots
\frac{\prod_{i \in I}g_i(x)}{b}}_{k-2 \text{ copies}}
\end{equation*}
implies the existence of a factorization of $f^k$ essentially different from
$\underbrace{f \cdots f}_{k \text{ copies}}.$
\end{proof}

The next lemma tells us that we cannot have interchangeable subsets in
the case when the fixed divisor $b$ of the numerator of $f$ is a prime $p$.
We begin with a supporting definition.

\begin{definition}
Let $I \neq \emptyset$ be a finite set and for $i \in I$, let $f_i \in \Z[x]$ be primitive
and irreducible in $\Z[x]$. Let $p$ be a prime dividing $\fixdiv(\prod_{i\in I}f_i)$.
We say $f_{k}$ is \emph{indispensable for $p$} (among the polynomials $f_i$ with $i\in I$)
if there exists an integer $z$ such that $\val_p(f_k(z)) > 0$ and $\val_p(f_i(z)) = 0$
for all $i\neq k$. We call such a $z$ a \emph{witness} for $f_k$ being indispensable for $p$.
\end{definition}

\begin{example}
Consider the polynomials $f_1 = x, f_2 = x-1$ and $f_3 = x-2$ in $\Z[x]$. It is easy to check that
$\fixdiv(x(x-1)(x-2)) = 6$. Now note that $f_2 = x-1$ is indispensable for 2 since for all odd numbers $a$,
\begin{equation*}
\val_2(f_2(a)) = \val_2(a-1) > 0
\text{ and } \val_2(f_1(a)) = \val_2(f_3(a)) =0.
\end{equation*}
\end{example}
In this case any odd number is a witness for $x-1$ being indispensable for 2.
On the other hand, $x$ and $x-2$ are not indispensable for 2 since for any even number $b$,
\begin{equation*}
\val_2(f_1(b)) = \val_2(b) > 0
\text{ and } \val_2(f_3(b)) = \val_2(b-2) > 0.
\end{equation*}

\begin{lemma}\label{lemma:inter:p}
Let $I \neq \emptyset$ be a finite set and for $i\in I$, let $g_i\in \Z[x]$
be primitive and irreducible in $\Z[x]$. Let
\begin{equation*}
f(x) = \frac{\prod_{i \in I}g_i(x)}{p} \in \IntZ
\end{equation*}
be irreducible in $\IntZ$. Then there do not exist interchangeable
subsets of $I$.
\end{lemma}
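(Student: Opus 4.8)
The plan is to argue by contradiction. Suppose $J_1, J_2 \subsetneqq I$ are interchangeable subsets, so $J_1 \cap J_2 = \emptyset$ and
\begin{equation*}
\fixdiv\Bigg(\prod_{i \in J_1}g_i\cdot \prod_{i \in I\setminus J_2}g_i\Bigg)
= \fixdiv\Bigg(\prod_{i \in J_2}g_i\cdot \prod_{i \in I\setminus J_1}g_i\Bigg) = (p).
\end{equation*}
Since $b = p$ is prime, condition (ii) of Remark \ref{remark:irreducible} says $\val_p(\fixdiv(\prod_{i\in I}g_i)) = 1$, i.e. there is a witness $z_0 \in \Z$ with $\val_p\big(\prod_{i\in I}g_i(z_0)\big) = 1$; hence exactly one index $k \in I$ has $\val_p(g_k(z_0)) = 1$ and $\val_p(g_i(z_0)) = 0$ for all $i \neq k$. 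I would distinguish cases according to where this distinguished index $k$ lies: either $k \in J_1$, or $k \in J_2$, or $k \in I \setminus (J_1 \cup J_2)$.

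The first step is to observe that the two products appearing in the interchangeability condition overlap substantially: writing $I$ as the disjoint union $J_1 \uplus J_2 \uplus K$ with $K = I \setminus (J_1 \cup J_2)$, the product $\prod_{i \in J_1}g_i \cdot \prod_{i \in I \setminus J_2}g_i$ equals $\prod_{i \in J_1}g_i^2 \cdot \prod_{i \in K}g_i$, and symmetrically the other product is $\prod_{i \in J_2}g_i^2 \cdot \prod_{i \in K}g_i$. Evaluating at the witness $z_0$, if $k \in K$ then both products are divisible by exactly $p^1$ from the factor $g_k(z_0)^1$ and all other factors contribute $\val_p = 0$; this is consistent, so the real obstruction must come from the partition-refinement structure, not from a single evaluation. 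So instead I would use the witness more cleverly together with condition (iii) of Remark \ref{remark:irreducible}: the hypothesis that $f$ is irreducible forbids splitting $I$ into two parts whose fixed divisors multiply to $(p)$, and since $\val_p(\fixdiv(\prod_{i\in I}g_i))=1$, for \emph{any} nontrivial partition $I = I_1 \uplus I_2$ we must have $p \mid \fixdiv(\prod_{i\in I_1}g_i)$ or $p \mid \fixdiv(\prod_{i\in I_2}g_i)$ but \emph{not} both — equivalently, $\val_p(\fixdiv(\prod_{i\in I_1}g_i)) + \val_p(\fixdiv(\prod_{i\in I_2}g_i)) \geq 1$ with equality, and exactly one summand is $1$.

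Now apply this to the partition $I = J_1 \uplus (I \setminus J_1)$ and to $I = J_2 \uplus (I \setminus J_2)$. The interchangeability equations give $\val_p\big(\fixdiv(\prod_{i\in J_1}g_i \cdot \prod_{i\in I\setminus J_2}g_i)\big) = 1$; but $\prod_{i \in I \setminus J_2}g_i$ already contains all of $\prod_{i\in J_1}g_i$ since $J_1 \subseteq I \setminus J_2$, so the fixed divisor of the whole product is at least that of $\prod_{i \in I \setminus J_2}g_i$, forcing $\val_p\big(\fixdiv(\prod_{i\in I\setminus J_2}g_i)\big) \leq 1$. Applying condition (iii) to the partition $I = J_2 \uplus (I \setminus J_2)$: one of the two blocks carries the full $\val_p = 1$ and the other carries $\val_p = 0$. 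Running the symmetric argument with $J_1$ and $J_2$ swapped, and combining the resulting inequalities on $\val_p(\fixdiv(\prod_{i\in J_1}g_i))$, $\val_p(\fixdiv(\prod_{i\in J_2}g_i))$, $\val_p(\fixdiv(\prod_{i\in K}g_i))$, I expect to pin down that the single unit of $p$-valuation cannot simultaneously satisfy the interchangeability equation for $J_1$ and the one for $J_2$ — because the ``extra'' copies $\prod_{i\in J_1}g_i$ in the first product versus $\prod_{i\in J_2}g_i$ in the second would have to contribute zero extra valuation, which combined with condition (iii) applied to a suitable refinement of the partition yields a contradiction with irreducibility of $f$. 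The main obstacle will be bookkeeping the valuations across these overlapping products and choosing exactly the right partition in Remark \ref{remark:irreducible}(iii) to close the case $k \in K$; the cases $k \in J_1$ and $k \in J_2$ should fall out more directly, since then one of the two interchangeability products is divisible by $p^2$, contradicting that its fixed divisor is $(p)$.
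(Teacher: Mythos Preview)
Your proposal contains a genuine error in the reading of condition~(iii) of Remark~\ref{remark:irreducible}. You assert that for any nontrivial partition $I = I_1 \uplus I_2$, irreducibility forces exactly one of $\fixdiv(\prod_{I_1}g_i)$, $\fixdiv(\prod_{I_2}g_i)$ to be divisible by $p$. This is backwards. Since $\fixdiv(\prod_{I_1}g_i)\cdot\fixdiv(\prod_{I_2}g_i)$ always divides $\fixdiv(\prod_I g_i) = (p)$, each subproduct fixed divisor is either $(1)$ or $(p)$, and condition~(iii) \emph{forbids} precisely the case where one is $(1)$ and the other is $(p)$. The correct conclusion is that \emph{both} equal $(1)$. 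With the inequalities reversed, the bookkeeping you sketch for the case $k \in K$ cannot close; there is no ``single unit of $p$-valuation'' to distribute among the blocks.

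Once condition~(iii) is read correctly, the argument becomes exactly the paper's. Applying the observation above to the partition $I = \{i\} \uplus (I\setminus\{i\})$ gives $\fixdiv(\prod_{j\neq i}g_j) = (1)$ for every $i$, so there is some $r_i$ with $p \nmid \prod_{j\neq i}g_j(r_i)$; since $p \mid \prod_j g_j(r_i)$, this forces $p \mid g_i(r_i)$ while $p \nmid g_j(r_i)$ for all $j \neq i$. In the paper's language, every $g_i$ is indispensable for $p$. Now simply choose $k \in J_1$ and evaluate the second interchangeability product $\prod_{j\in J_2}g_j \cdot \prod_{j\in I\setminus J_1}g_j$ at $r_k$: every factor is a $g_j(r_k)$ with $j\neq k$, hence coprime to $p$, so the product is coprime to $p$ and its fixed divisor cannot be $(p)$. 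No case split on the location of a single global witness $z_0$ is needed.

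Incidentally, your treatment of the ``easy'' cases $k \in J_1$ or $k \in J_2$ is also slightly off. Observing that one product has $\val_p = 2$ at $z_0$ does not contradict its fixed divisor being $(p)$, since the fixed divisor records the \emph{minimum} valuation. The contradiction comes instead from the \emph{other} product (the one with $J_2$ squared, when $k\in J_1$) having $\val_p = 0$ at $z_0$---which is exactly the indispensability argument specialized to your particular witness.
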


\begin{proof}
Suppose $J_1, J_2 \subsetneqq I$ are disjoint. Now since $f$ is irreducible, every $g_i$ for $i \in I$
is indispensable for $p$ and this implies that $g_i \neq g_j$ for $i \neq j$. Thus $J_1$ and $J_2$ being disjoint,
are element-disjoint. Furthermore, if $r_i$ is a witness for $g_i$ being indispensable for $p$, then
\begin{equation*}
\val_p\left(\prod_{j \in I \setminus \{i\}}g_j(r_i)\right) = 0.
\end{equation*}
Now suppose $g_k$ for $k \in J_1$ is indispensable for $p$ with witness $r_k$. Then,
since $J_1$ and $J_2$ are element-disjoint,
it follows that
\begin{equation*}
\val_p\left(\prod_{j \in (I \setminus J_1)}g_j(r_k)\cdot \prod_{j \in J_2}g_j(r_k)\right) = 0.
\end{equation*}
Thus $\fixdiv\left(\prod_{j \in (I \setminus J_1)}g_j(x)\cdot \prod_{j \in J_2}g_j(x)\right)$
is not divisible by $p$. This shows that $J_1, J_2$ are not interchangeable because if they were, we
would have
\begin{equation*}
\fixdiv\left(\prod_{j \in (I \setminus J_1)}g_j(x)\cdot \prod_{j \in J_2}g_j(x)\right) = (p).
\end{equation*}
\end{proof}

The next lemma generalizes Example \ref{exam:typeII}. In Example \ref{exam:typeII},
setting $g_0 = c(x)$, $g_1 = d(x)$ and $g_i = x-a_{i-1}$ for $i = 2, \ldots, m+1$,
we can choose $\{0, 1\}$ for the index set $J$ in
Lemma \ref{lemma:type2_non-abs}.

\begin{lemma}\label{lemma:type2_non-abs}
Let $I \neq \emptyset$ be a finite set and for $i \in I$, let $g_i \in \Z[x]$ be
primitive and irreducible in $\Z[x]$.
Suppose
\begin{equation*}
 f = \frac{\prod_{i \in I}g_i(x)}{b} \in \IntZ
\end{equation*}
is irreducible in $\IntZ$, where $b > 1$. Let $\p$ be the set of prime divisors of $b$
and $b= \prod_{p \in \p} p^{e_p}$ be the prime factorization of $b$ with $e_p \in \mathbb{N}$.
If there exists a subset $\emptyset \neq J \subsetneqq I$ such that for all $p \in \p$,
for every integer $s$ that is a root mod $p$ of $\prod_{j\in J}g_j$, we have
\begin{equation}\label{ine}
\val_p\left(\prod_{j\in J}g_j(s)\right)> e_p,
\end{equation}
then $f$ is not absolutely irreducible.
\end{lemma}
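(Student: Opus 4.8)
The plan is to imitate the explicit factorization that appears in Example~\ref{exam:typeII}: split off the ``$J$-part'' with all its multiplicities inflated so that it absorbs exactly the denominator $b$, and collect the leftover copies of the $g_i$ with $i\in J$ into factors that are units-times-polynomials in $\Z[x]$. Concretely, fix the subset $\emptyset\neq J\subsetneqq I$ provided by the hypothesis and write $K = I\setminus J\neq\emptyset$. For a natural number $k$ large enough (I expect $k$ must be at least $\max_{p\in\p} e_p$, or something of that order, for the denominators to clear) I would propose the candidate factorization
\begin{equation*}
f^k \;=\; \frac{\prod_{i\in I}g_i(x)\cdot\prod_{j\in J}g_j(x)^{\,k-1}}{b^{\,?}}
\;\cdot\;\Big(\prod_{j\in J}g_j(x)\Big)^{?}\;\cdot\;\underbrace{f\cdots f}_{?\text{ copies}},
\end{equation*}
where the exponents are chosen so that (a)~the total multiplicity of each $g_i$ is $k$, and (b)~each displayed factor lies in $\IntZ$. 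The point of inequality~\eqref{ine} is precisely that when we raise $\prod_{j\in J}g_j$ to a high enough power, its fixed divisor at each prime $p\mid b$ grows past $e_p$, so one can route the full $p^{e_p}$ through that factor.

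The key steps, in order, would be: First, use Remark~\ref{remark:irreducible} together with the hypothesis~\eqref{ine} to show that for a suitable exponent $t$, the polynomial $\dfrac{\prod_{i\in K}g_i(x)\cdot\prod_{j\in J}g_j(x)^{\,t}}{b}$ is integer-valued; here the crucial computation is $\val_p\!\big(\prod_{i\in K}g_i(u)\cdot\prod_{j\in J}g_j(u)^t\big)\ge e_p$ for every integer $u$, split according to whether $u$ is a root mod $p$ of $\prod_{j\in J}g_j$ (use~\eqref{ine}, inflating $t$ so $t\cdot(\text{something}>e_p)\ge e_p$, which is automatic) or not (then pick up the $p$-valuation from $\fixdiv(\prod_{i\in I}g_i)=(b)$, using that $f$ is irreducible). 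Second, check that this factor, and the leftover factors $\prod_{j\in J}g_j(x)$ (which are in $\Z[x]\subseteq\IntZ$, hence products of irreducibles in $\IntZ$), and the copies of $f$, multiply to $f^k$ with the correct total power of each $g_i$ and the correct total power of $b$ --- this is bookkeeping on exponents. Third, argue the resulting factorization is essentially different from $f\cdots f$: the factor $\dfrac{\prod_{i\in K}g_i\cdot\prod_{j\in J}g_j^{\,t}}{b}$ is not associated to $f$ in $\IntZ$ because $\IntZ$ has only the units $\pm1$, so associated polynomials have the same degree, and inflating the $J$-part strictly raises the degree (since $J\neq\emptyset$ and each $\deg g_j\ge 1$); alternatively the new factorization has a strictly different length or a factor of strictly larger degree, so it cannot be a reordering of $k$ copies of $f$.

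The main obstacle I anticipate is the first step: verifying that $\dfrac{\prod_{i\in K}g_i(x)\cdot\prod_{j\in J}g_j(x)^{t}}{b}\in\IntZ$, i.e.\ that for \emph{every} integer $u$ and \emph{every} $p\in\p$ we have $\val_p\big(\prod_{i\in K}g_i(u)\big)+t\cdot\val_p\big(\prod_{j\in J}g_j(u)\big)\ge e_p$. The case $u$ a root mod $p$ of the $J$-product is handled directly by~\eqref{ine}; the delicate case is $u$ \emph{not} a root mod $p$ of $\prod_{j\in J}g_j$, where $\val_p\big(\prod_{j\in J}g_j(u)\big)=0$ and one must extract the full $p^{e_p}$ from $\prod_{i\in K}g_i(u)$ alone. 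This needs that $f$'s irreducibility, via condition~(iii) of Remark~\ref{remark:irreducible}, forces $\fixdiv(\prod_{i\in K}g_i)$ to already contain $p^{e_p}$ whenever $J$ contributes nothing mod $p$ --- which is exactly what condition~(iii) rules a partition out for. Once this local analysis is pinned down, the rest is routine exponent accounting and the trivial observation about units of $\IntZ$.
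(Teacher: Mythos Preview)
Your two-case analysis (split on whether $s$ is a root mod $p$ of $\prod_{j\in J}g_j$) is exactly the engine of the paper's proof, and your treatment of the non-root case is correct --- though it needs only $f\in\IntZ$, not condition~(iii) of Remark~\ref{remark:irreducible}: when $\val_p\bigl(\prod_{j\in J}g_j(s)\bigr)=0$ one has $\val_p\bigl(\prod_{i\in K}g_i(s)\bigr)=\val_p\bigl(\prod_{i\in I}g_i(s)\bigr)\ge e_p$ immediately.

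The genuine gap is the shape of your candidate factor. With $K$-multiplicity $1$ and a single $b$ downstairs, the element $\dfrac{\prod_{i\in K}g_i\cdot(\prod_{j\in J}g_j)^{t}}{b}$ is nothing but $f\cdot\bigl(\prod_{j\in J}g_j\bigr)^{t-1}$, and the exponent bookkeeping then collapses: writing $f^k$ as a product of such factors, copies of $f$, and copies of $\prod_{j\in J}g_j$ forces $(t-1)\alpha+\gamma=0$ in the obvious notation, hence $t=1$ or $\alpha=0$, i.e.\ only the trivial factorization. The non-root case is what pins you: with $K$-multiplicity $1$ on top, at a non-root $s$ you can extract at most $p^{e_p}$, so the denominator can never exceed $b$. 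The fix --- and this is what the paper does --- is to inflate the $K$-part \emph{and} the denominator instead of the $J$-part. Setting $n=\max_{p\in\p}e_p$, one writes
\[
f^{\,n+1}=\frac{\bigl(\prod_{j\in J}g_j\bigr)^{n}\bigl(\prod_{i\in I\setminus J}g_i\bigr)^{n+1}}{b^{\,n+1}}\cdot\prod_{j\in J}g_j,
\]
and now your two cases do exactly the right job: at a root mod $p$ of $\prod_J g_j$, hypothesis~\eqref{ine} gives $\val_p\bigl((\prod_J g_j(s))^{n}\bigr)\ge n(e_p+1)\ge (n+1)e_p$ since $n\ge e_p$; at a non-root, the $n+1$ copies of $\prod_{i\in K}g_i(s)$ each contribute at least $e_p$. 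The split-off factor $\prod_{j\in J}g_j\in\Z[x]$ is a non-unit (since $J\neq\emptyset$), so any refinement into irreducibles is essentially different from $f\cdots f$.
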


\begin{proof}
Suppose there exists $\emptyset \neq J \subsetneqq I$ such that for all $p \in \p$,
inequality \ref{ine} is satisfied.
Let $n = \max\{e_p \mid p \in \p\}$. We claim that $f^{n+1}$ has a factorization
essentially different from $f \cdots f$. The existence of such a factorization follows from \ref{fact} below.

\begin{equation}\label{fact}
f^{n+1} = \frac{\left(\prod_{j \in J}g_j(x)\right)^{n}\left(\prod_{i \in I\setminus J}g_i(x)\right)^{n+1}}{b^{n+1}} \cdot
\prod_{j \in J}g_j(x).
\end{equation}
To see that the factor on the left is integer-valued,
let $p \in \p$ and $s \in \Z$. If $s$ is a root mod $p$ of
$\prod_{j\in J}g_j$, then

$$\val_p\left(\left(\prod_{j\in J}g_j(s)\right)^n \right) \ge n (e_p +1) = ne_p +n \ge (n+1)e_p = \val_p(b^{n+1})$$
On the other hand, if $s$ is not a root mod $p$ of $\prod_{j\in J}g_j$,
then
$$\val_p\left(\left(\prod_{j\in J}g_j(s)\right)^n \left(\prod_{i\in I\setminus J}g_i(s)\right)^{n+1}\right)
= \val_p\left(\left(\prod_{i\in I} g_i(s)\right)^{n+1} \right) \ge \val_p(b^{n+1}).$$

Finally, that the factorization \ref{fact} can be refined to a factorization
into irreducibles (necessarily essentially different from $f \cdots f$) follows
from the fact that $\Int(\Z)$ is atomic.
\end{proof}

We generalize Examples \ref{exam:typeIIc} and \ref{exam:typeIIb} in the next lemma.
In Example \ref{exam:typeIIc}, setting $g_0 = x^4 + x^3 +8$ and $g_1 = x-3$,
the index set $J$ in Lemma \ref{lemma:type2i_non-abs} was $\{0\}$.

\begin{lemma}\label{lemma:type2i_non-abs}
Let $I \neq \emptyset$ be a finite set and for $i \in I$, let $g_i \in \Z[x]$ be
primitive and irreducible in $\Z[x]$. Suppose
\begin{equation*}
f(x) = \frac{\prod_{i \in I}g_i(x)}{p^n} \in \IntZ
\end{equation*}
is irreducible in $\IntZ$, where $p$ is a prime and $n > 1$.

If there exists $J \subsetneqq I$ such that the following holds:
\begin{enumerate}
\item \label{item2:lemma:type2i_non-abs}$\Z = S \uplus T$ where $S$ and $T$ are each
a union of residue classes mod $p$ and such that
\begin{equation*}
\val_p\left(gcd\left(\prod_{i \in J}g_i(a)\Biggm\rvert a \in S\right)\right) > n \text{ and } \forall~ t \in T,~
\val_p\left(gcd\left(\prod_{i \in J}g_i(b)\Biggm\rvert b \in t + p\Z\right)\right) = e,
\end{equation*}
with $1 \leq e < n$, and for all $t \in T$
\begin{equation}\label{ine:type2i}
\val_p\left(gcd\left(\prod_{i \in J}g_i(b)\Biggm\rvert b \in t + p\Z\right)\right) +
\val_p\left(gcd\left(\prod_{i \in I\setminus J}g_i(b)\Biggm\rvert b \in t + p\Z\right)\right) \geq n.
\end{equation}
\end{enumerate}
Then $f$ is not absolutely irreducible.
\end{lemma}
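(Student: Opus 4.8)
The plan is to mimic the factorization in Example~\ref{exam:typeIIc}: for a suitable power $k>1$ I will write down an explicit two-factor expression for $f^{k}$ and refine it into irreducibles, obtaining a factorization of $f^{k}$ essentially different from $f\cdots f$. Write $P=\prod_{i\in J}g_{i}$ and $Q=\prod_{i\in I\setminus J}g_{i}$, so $f=PQ/p^{n}$. Hypothesis~(i) yields the pointwise valuation bounds I will use: $\val_{p}(P(a))\ge n+1$ for every $a\in S$, and, for every $b\in T$, both $\val_{p}(P(b))\ge e$ and $\val_{p}(Q(b))\ge n-e$ (the last because, by inequality~\ref{ine:type2i}, all values $Q(b)$ with $b$ in a fixed residue class of $T$ are divisible by $p^{n-e}$). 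I also record that $P$ and $Q$ are non-constant, since a constant polynomial among the primitive $g_{i}$ has $p$-adic valuation $0$, contradicting the bound on $S$ (for $P$) or on $T$ (for $Q$, as $n-e\ge 1$).

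Take $k=n-e+1$; this is $\ge 2$ because $1\le e\le n-1$. The factorization I would use is
\begin{equation*}
f^{n-e+1}=\frac{P^{n-e}Q^{n-e+1}}{p^{(n-e)(n+1)}}\cdot\frac{P}{p^{e}}.
\end{equation*}
That this is an identity is a short check: the numerators multiply to $(PQ)^{n-e+1}$, and the denominator exponents add to $(n-e)(n+1)+e=n(n-e+1)$. The hypotheses enter only in verifying that both displayed factors lie in $\IntZ$. For $P/p^{e}$ this is immediate, since $\val_{p}(P(z))\ge e$ for every $z\in\Z$. For the first factor one must verify $\val_{p}\big(P(z)^{n-e}Q(z)^{n-e+1}\big)\ge(n-e)(n+1)$ for all $z\in\Z$; splitting along $\Z=S\uplus T$, on $S$ this follows from $\val_{p}(P(z))\ge n+1$, while on $T$ it follows from $\val_{p}(P(z))\ge e$ together with $\val_{p}(Q(z))\ge n-e$, which give $(n-e)e+(n-e+1)(n-e)=(n-e)(n+1)$.

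Granting this, atomicity of $\IntZ$ lets me refine each of the two factors into a product of irreducibles, so $f^{n-e+1}$ acquires a factorization into irreducibles. To see it is essentially different from $f\cdots f$, it suffices to exhibit one irreducible occurring in it that is not associated to $f$: I take any irreducible divisor $h$ of $P/p^{e}$, which exists since $P/p^{e}$ is a nonzero non-unit of $\IntZ$ (being non-constant). If $h$ were associated to $f$, then $f\mid P/p^{e}$ in $\IntZ$, which is impossible on degree grounds, since $\deg(P/p^{e})=\deg P<\deg P+\deg Q=\deg f$ (here $\deg Q\ge1$). Hence $f^{n-e+1}$ has a factorization essentially different from $f\cdots f$, and $f$ is not absolutely irreducible.

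The one step that is not routine is recognizing that one should pass to the power $n-e+1$ (rather than a small fixed power) and then choosing the exponents $n-e$, $n-e+1$, $(n-e)(n+1)$ and $e$ so that the estimates on $S$ and on the residue classes of $T$ both come out exactly tight; after that, everything reduces to the bounds already built into hypothesis~(i). As a sanity check, for $e=n-1$ the displayed factorization is $f^{2}=\frac{PQ^{2}}{p^{n+1}}\cdot\frac{P}{p^{n-1}}$, which is precisely the factorization used in Example~\ref{exam:typeIIc}.
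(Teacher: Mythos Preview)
Your proof is correct and follows essentially the same strategy as the paper's: split $(PQ)^{k}$ into a piece carrying $P^{n-e}Q^{k}$ over a large power of $p$ and a residual power of $P/p^{e}$, then verify integer-valuedness separately on $S$ and on $T$. The paper sets $m=\val_{p}\bigl(\gcd(P(a)\mid a\in S)\bigr)$ and takes $k=m-e$, writing
\[
f^{k}=\frac{P^{n-e}Q^{k}}{p^{(n-e)m}}\cdot\Bigl(\frac{P}{p^{e}}\Bigr)^{m-n};
\]
your choice $k=n-e+1$ is exactly this with $m$ replaced by its guaranteed lower bound $n+1$. In that sense your argument is a mild simplification: you never need the precise value of $m$, nor the paper's auxiliary observation that $\min_{t\in T}\val_{p}(\gcd(Q(b)\mid b\in t+p\Z))=n-e$ (you only use the inequality $\ge n-e$, which is immediate from~\eqref{ine:type2i}). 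You also supply an explicit degree argument for why the refined factorization cannot be $f\cdots f$, which the paper leaves implicit.
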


\begin{proof}
Suppose there exists a subset $J \subsetneqq I$ such that \ref{item2:lemma:type2i_non-abs} holds.

It follows from inequality \eqref{ine:type2i} and $f$ being irreducible that
\begin{equation*}
\min_{t \in T}\left\{\val_p\left(gcd\left(\prod_{i \in I\setminus J}g_i(b)\Biggm\rvert b \in t + p\Z\right)\right)\right\} = n-e.
\end{equation*}
Now let
\begin{equation*}
m = \val_p\left(gcd\left(\prod_{i \in J}g_i(a)\Biggm\rvert a \in S\right)\right) > n.
\end{equation*}
We set $k=m-e$ and claim that $f^k$ has a factorization essentially different from
$f \cdots f$. This factorization follows from
 \begin{equation*}
f^k = \frac{(\prod_{i \in J}g_i)^{n-e}(\prod_{i \in I \setminus J} g_i)^{k}}{p^{(n-e)m}}
\cdot \left(\frac{\prod_{i \in J} g_i}{p^{e}}\right)^{m-n}.
\end{equation*}

In fact, for each $t \in T$, let
\begin{equation*}
l = \val_p\left(gcd\left(\prod_{i \in J}g_i(b)\Biggm\rvert b \in t + p\Z \right)^{n-e}\right) +
\val_p\left(gcd\left(\prod_{i \in I \setminus J}g_i(b)\Biggm\rvert b \in t + p\Z \right)^k\right).
\end{equation*}
Then $l = e(n-e) + k(n-e) = e(n-e) + (m-e)(n-e) = (n-e)m$.

Furthermore
\begin{equation*}
\val_p\left(gcd\left(\prod_{i \in J}g_i(a)\Biggm\rvert
a \in S\right)^{n-e}\right) = (n-e)m.
\end{equation*}
Moreover, $k-(n-e)= m-n$ and $(n-e)m + (m-n)e = (m-e)n = kn$.
\end{proof}

\section*{Acknowledgement}
I would like to thank my advisor Prof. Sophie Frisch for her comments and guidance.
The author is supported by the Austrian Science Fund~(FWF):~P~30934.

\bibliographystyle{plain}
\bibliography{references}

\begin{thebibliography}{10}

\bibitem{Anderson-Cahen:Some-factorization-properties}
David~F. Anderson, Paul-Jean Cahen, Scott~T. Chapman, and William~W. Smith.
\newblock Some factorization properties of the ring of integer-valued
  polynomials.
\newblock In {\em Zero-dimensional commutative rings ({K}noxville, {TN},
  1994)}, volume 171 of {\em Lecture Notes in Pure and Appl. Math.}, pages
  125--142. Dekker, New York, 1995.

\bibitem{Antoniou-Nakato-Rissner}
Austin Antoniou, Sarah Nakato, and Roswitha Rissner.
\newblock Irreducible polynomials in int($\mathbb{Z}$).
\newblock {\em ITM Web Conf.}, 20:01004, 2018.

\bibitem{Cahen-Chabert:Elasticity-for-integral-valued-polynomials}
Paul-Jean Cahen and Jean-Luc Chabert.
\newblock Elasticity for integral-valued polynomials.
\newblock {\em J. Pure Appl. Algebra}, 103(3):303--311, 1995.

\bibitem{Cahen-Chabert:Integer-valued-polynomials}
Paul-Jean Cahen and Jean-Luc Chabert.
\newblock {\em Integer-valued polynomials}, volume~48 of {\em Mathematical
  Surveys and Monographs}.
\newblock American Mathematical Society, Providence, RI, 1997.

\bibitem{Chapman-Krause:Atomic-decay}
Scott~T. Chapman and Ulrich Krause.
\newblock A closer look at non-unique factorization via atomic decay and strong
  atoms.
\newblock In {\em Progress in commutative algebra 2}, pages 301--315. Walter de
  Gruyter, Berlin, 2012.

\bibitem{Chapman-McClain:irrediv-Irreducible-polynomials}
Scott~T. Chapman and Barbara~A. McClain.
\newblock Irreducible polynomials and full elasticity in rings of
  integer-valued polynomials.
\newblock {\em J. Algebra}, 293(2):595--610, 2005.

\bibitem{Frisch:prescribed-sets-of-lengths}
Sophie Frisch.
\newblock A construction of integer-valued polynomials with prescribed sets of
  lengths of factorizations.
\newblock {\em Monatsh. Math.}, 171(3-4):341--350, 2013.

\bibitem{Frisch-Nakato-Rissner:sets-of-lengths}
Sophie Frisch, Sarah Nakato, and Roswitha Rissner.
\newblock Sets of lengths of factorizations of integer-valued polynomials on
  dedekind domains with finite residue fields.
\newblock {\em Journal of Algebra}, 528:231 -- 249, 2019.

\bibitem{Geroldinger-Halter-Koch:Non-unique-factorizations}
Alfred Geroldinger and Franz Halter-Koch.
\newblock {\em Non-unique factorizations}, volume 278 of {\em Pure and Applied
  Mathematics (Boca Raton)}.
\newblock Chapman \& Hall/CRC, Boca Raton, FL, 2006.
\newblock Algebraic, combinatorial and analytic theory.

\bibitem{Kaczorowski:completely-irreducible}
J.~Kaczorowski.
\newblock A pure arithmetical characterization for certain fields with a given
  class group.
\newblock {\em Colloq. Math.}, 45(2):327--330, 1981.

\bibitem{Peruginelli:square-free-denominator}
Giulio Peruginelli.
\newblock Factorization of integer-valued polynomials with square-free
  denominator.
\newblock {\em Comm. Algebra}, 43(1):197--211, 2015.

\end{thebibliography}

\end{document}